\definecolor{darkblue}{RGB}{0,0,160}
\newcommand{\excise}[1]{}
\newtheorem{thm}{Theorem}
\newtheorem{lemma}[thm]{Lemma}
\newtheorem{cor}[thm]{Corollary}
\newtheorem{prop}[thm]{Proposition}
\newtheorem{alg}[thm]{Algorithm}
\theoremstyle{definition}
\newtheorem{example}[thm]{Example}
\newtheorem{remark}[thm]{Remark}
\newtheorem{defn}[thm]{Definition}
\numberwithin{equation}{section}
\newcommand{\ring}[1]{\ensuremath{\mathbb{#1}}}
\renewcommand{\>}{\rangle}
\newcommand{\<}{\langle}
\newcommand\CC{\ring{C}}
\newcommand\NN{\ring{N}}
\newcommand\QQ{\ring{Q}}
\newcommand\RR{\ring{R}}
\newcommand\ZZ{\ring{Z}}
\newcommand\kk{\mathbb{K}}
\newcommand\cB{{\mathcal B}}
\newcommand\xb{\mathbf{x}}
\newcommand\yb{\mathbf{y}}
\newcommand{\QLaurx}{\QQ[x^\pm_1, \dots, x^\pm_n]}
\newcommand{\kLaurx}{\kk[x^\pm_1, \dots, x^\pm_n]}
\newcommand{\QLaurym}{\QQ[y^\pm_1, \dots, y^\pm_m]}
\newcommand{\kLaurym}{\kk[y^\pm_1, \dots, y^\pm_m]}
\def\ol#1{{\overline {#1}}}
\newcommand\set[1]{\{#1\}}
\newcommand{\with}{\colon}
\newcommand{\units}{\times}
\DeclareMathOperator\Id{Id} 
\DeclareMathOperator\rank{rank} 
\DeclareMathOperator{\Span}{span} 
\newcommand{\defas}{\mathrel{\mathop{:}}=}   
\DeclareMathOperator{\ini}{in} 
\DeclareMathOperator{\binn}{Bin} 
\newcommand{\bin}[1]{\binn(#1)}
\begin{document}

\title{Finding binomials in polynomial ideals}

\def\urladdrname{{\itshape Website}}

\author{Anders Jensen}
\address{Aarhus Universitet\\ Aarhus, Denmark}
\urladdr{\url{http://home.math.au.dk/jensen/}}

\author{Thomas Kahle}
\address{Otto-von-Guericke Universität\\ Magdeburg, Germany}
\urladdr{\url{http://www.thomas-kahle.de}}

\author{Lukas Katthän}
\address{Goethe-Universität Frankfurt\\ Frankfurt, Germany}
\urladdr{\url{http://www.math.uni-frankfurt.de/~katthaen/}}

\date{\today}


\subjclass[2010]{Primary: 68W99; Secondary: 11R04, 11Y16, 11Y40, 13P05, 13P99, 14T05, 68W30}
%

\keywords{algorithm, binomial detection, binomial ideal, tropical geometry}


\begin{abstract}
We describe an algorithm which finds binomials in a given ideal
$I\subset\QQ[x_1,\dots,x_n]$ and in particular decides whether
binomials exist in $I$ at all.  Binomials in polynomial ideals can be
well hidden.  For example, the lowest degree of a binomial cannot be
bounded as a function of the number of indeterminates, the degree of
the generators, or the Castelnuovo--Mumford regularity.
We approach the detection problem by reduction to the Artinian case
using tropical geometry.  The Artinian case is solved with algorithms
from computational number theory.
\end{abstract}

\maketitle


\section{Introduction}
\label{sec:introduction}

\noindent The goal of this paper is to prove the following result.
\begin{thm}\label{t:main}
There is a deterministic algorithm that, given generators, decides
whether an ideal in the polynomial ring $\QQ[x_1,\dots,x_n]$ contains
nonzero binomials.
\end{thm}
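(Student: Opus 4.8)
The plan is to reduce, through a passage to the Laurent ring and a tropical change of coordinates, to a zero-dimensional (Artinian) instance, which is then settled by algorithms for multiplicative relations in number fields.

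\emph{Step 1: reduction to pure binomials in the Laurent ring.} I would first observe that $I$ contains a nonzero binomial if and only if its extension $I^\pm:=I\cdot\QQ[x^\pm_1,\dots,x^\pm_n]$ contains a \emph{pure} binomial $x^u-c$ with $u\in\ZZ^n\setminus\{0\}$ and $c\in\QQ^\times$. Dividing a binomial $x^a-c\,x^b\in I$ by $x^b$ yields $x^{a-b}-c\in I^\pm$; conversely, multiplying $x^u-c\in I^\pm$ by a high power of $x_1\cdots x_n$ clears denominators and returns a genuine two-term binomial to $I$. The ideal $I^\pm$ is computable (its contraction to $\QQ[x_1,\dots,x_n]$ is the saturation $I:(x_1\cdots x_n)^\infty$); if $I^\pm=(1)$ the answer is ``yes'', so assume $I^\pm$ is proper. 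Note that $\{(u,c)\in\ZZ^n\times\QQ^\times : x^u-c\in I^\pm\}$ is a subgroup, whose projection $L\subseteq\ZZ^n$ is a lattice; we must decide whether $L\neq 0$.

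\emph{Step 2: tropical localization of the exponents.} If $x^u-c\in I^\pm$ then $\operatorname{Trop}(I^\pm)\subseteq\operatorname{Trop}(x^u-c)=u^\perp$, so $L$ lies in the saturated lattice $M:=\bigl(\Span_\RR\operatorname{Trop}(I^\pm)\bigr)^\perp\cap\ZZ^n$, which is computable once $\operatorname{Trop}(I^\pm)$ is (e.g.\ with \gfan). If $M=0$, report ``no''. Otherwise put $d=\rank M$ and apply a monomial change of coordinates --- a $\ZZ$-linear automorphism of the Laurent ring, i.e.\ an element of $\operatorname{GL}_n(\ZZ)$ obtained from a Hermite normal form --- carrying $M$ onto $\ZZ^d\times\{0\}$. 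Then $\operatorname{Trop}(I^\pm)\subseteq\{0\}\times\RR^{n-d}$, and since tropicalizing the elimination ideal $\widetilde{I}:=I^\pm\cap\QQ[x^\pm_1,\dots,x^\pm_d]$ gives the image of $\operatorname{Trop}(I^\pm)$ under the coordinate projection $\RR^n\to\RR^d$, we get $\operatorname{Trop}(\widetilde{I})=\{0\}$; hence $\widetilde{I}$ is a proper, nonzero, zero-dimensional ideal, so $A:=\QQ[x^\pm_1,\dots,x^\pm_d]/\widetilde{I}$ is Artinian. Crucially, since $L\subseteq M=\ZZ^d\times\{0\}$, every pure binomial of $I^\pm$ already lies in $\widetilde{I}$; thus it suffices to decide whether the Artinian ideal $\widetilde{I}$ contains a nonzero pure binomial.

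\emph{Step 3: the Artinian case.} Now the images $\bar x_1,\dots,\bar x_d\in A^\times$ are units of an Artinian $\QQ$-algebra, and I must decide whether $\bar x^u=c$ for some $u\in\ZZ^d\setminus\{0\}$ and $c\in\QQ^\times$. Decompose $A\cong\prod_j A_j$ into local Artinian factors with residue fields $K_j$, which are number fields. For each $j$, compute the lattice $L_j=\{u:\bar x^u\in\QQ^\times\text{ in }A_j\}$ and the homomorphism $\phi_j\colon L_j\to\QQ^\times$, $u\mapsto\bar x^u$: modulo the maximal ideal $\mathfrak m_j$ this amounts to finding the multiplicative relations among the prescribed algebraic numbers $\bar x_i\bmod\mathfrak m_j$ that land in $\QQ^\times$, which is solved by algorithms from computational number theory (unit and $S$-unit groups, class groups, discrete logarithms), while the contribution of the principal units $1+\mathfrak m_j$ is handled by linear algebra over $\QQ$ via the logarithm. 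Finally, $\widetilde{I}$ contains the desired binomial if and only if the subgroup $\{u\in\bigcap_j L_j : \phi_i(u)=\phi_j(u)\text{ for all }i,j\}$ --- the kernel of a homomorphism from the lattice $\bigcap_j L_j$ into a finitely generated abelian group --- is nonzero, which is a finite computation. Tracing back through Steps 2 and 1 recovers an explicit binomial in $I$ whenever one exists.

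\emph{Expected main obstacle.} The conceptual crux is Step 2 --- that $\operatorname{Trop}(I^\pm)$ pins down the exponents tightly enough to make $\widetilde{I}$ Artinian --- but this rests on standard tropical machinery (monotonicity of tropical varieties and compatibility with coordinate projections). The genuine difficulty I anticipate lies in Step 3: reliably deciding, from the multiplicative behaviour of the images of the $x_i$ in a product of Artinian local algebras over number fields, whether a single nontrivial exponent vector can be forced into $\QQ^\times$ simultaneously in every factor. That is where the number-theoretic subroutines enter, and where correctness and termination --- especially for the non-reduced components --- will require the most care.
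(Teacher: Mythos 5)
Your proposal is correct and, in its first two steps, follows the paper's route almost verbatim: pass to the Laurent ring (Lemma~\ref{l:saturate}, including your correct observation that a monomial in $I$ forces a binomial), note that exponents of binomials must lie in $\Span(T(I^\pm))^\perp\cap\ZZ^n$, change coordinates, and eliminate down to an Artinian ideal containing the same pure binomials --- this is exactly Proposition~\ref{p:binproject} and Theorem~\ref{t:reduceArt}, and the projection identity for tropical varieties you invoke is precisely how the paper proves Artinian-ness. The genuine divergence is in the Artinian step. The paper represents multiplication by the variables as commuting invertible matrices $M_i$ on the finite-dimensional quotient and normalizes $M_i'=M_i/\sqrt[\ell]{\det M_i}$ over a finite extension of $\QQ$, so that ``there exists $\lambda$ with $\yb^e-\lambda\in I$'' becomes the single identity $\prod_i(M_i')^{e_i}=\Id$ (Proposition~\ref{p:MakeCoeffOne}); the relation lattice of commuting matrices over a number field is then computed by a known algorithm. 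You instead open up that black box: local decomposition of the Artinian algebra, multiplicative relations modulo $\QQ^\times$ in the residue fields, logarithms on the principal units $1+\mathfrak{m}_j$, and a cross-factor matching of the scalars $\phi_j(u)$. This is workable --- it is essentially what the cited matrix algorithms do internally, resting on the same algorithms for multiplicative relations among algebraic numbers --- but it carries exactly the burdens you flag (coefficient fields, the ``lands in $\QQ^\times$'' variant of the relations problem, intersecting lattices and characters over all local factors), all of which the determinant normalization sidesteps by pinning down the unique candidate $\lambda$ in advance. Two minor practical remarks: for decidability you need not compute all of $T(I^\pm)$ (which can be enormous); the paper's Algorithm~\ref{alg:tropspan} computes only $\Span(T(I^\pm))$, which is all you use. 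And if one wants all binomials of $I$ itself rather than of $I^\pm$, the saturation step loses information (Remark~\ref{r:swallow}), though not for the mere existence question posed in the theorem.
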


Theorem~\ref{t:main} answers a fundamental question in computational
algebra, but we envision that it will also be useful for applications.
To name just a few, when implementing mesoprimary decomposition of
binomial ideals~\cite{kahle11mesoprimary}, a test for binomials is
necessary.  In the theory of retractions of polytopal algebras,
\cite[Conjecture~B]{bruns2002polytopal} is connected to the existence
of binomials and monomials in the kernels of certain maps (albeit
after a graded automorphism of the ambient ring).
In~\cite{sontag2014technique}, Sontag argues that polynomials with few
terms in an ideal yield the best restrictions on the possible sign
patterns of changes that a steady state of a chemical reaction network
can undergo under perturbation.
Theorem~\ref{t:main} can also be seen as a first step to the broader
problem of deciding whether an ideal contains a sparse polynomial, or
finding the sparsest polynomial.  For example, Jürgen Herzog suggested
the problem of determining the length of the shortest polynomial in a
standard determinantal ideal.

It does not seem possible to prove Theorem~\ref{t:main} by standard
arguments using Gröbner bases.  For example the ideal
$\<x^2+x+1\> \subset \QQ[x]$ contains $x^3-1$, but its generator,
trivially, is a universal Gröbner basis.  Moreover, the lowest-degree
binomials in an ideal need not satisfy a general upper degree bound in
terms of common invariants such as Castelnuovo--Mumford regularity or
primary decomposition.
\begin{example}\label{e:deepBinomials}
For any $n\in\NN$, let
$I = \<(x-z)^2, nx - y - (n-1)z)\> \subset \QQ[x,y,z]$.  The
Castelnuovo--Mumford regularity of $I$ is $2$ and it is primary over
$\<x-z, y-z\>$.  The binomial $x^n-y z^{n-1}$ is contained in $I$
because an elementary computation shows that
\[ x^n-y z^{n-1} = \sum_{k=0}^{n-2} (n-k-1)x^k z^{n-k-2} (x-z)^2 +
z^{n-1}(nx - y - (n-1)z) \in I.\]
There is no binomial of degree less than $n$ in~$I$.  To see this,
consider the differential operators
$D_1 = \partial_{x} + n\partial_{y}$ and
$D_2 = (1-n)\partial_{y} + \partial_{z}$.  Any element $f \in I$
satisfies $f(1,1,1) = 0,\ (D_1 f)(1,1,1) = 0$ and $(D_2 f)(1,1,1) = 0$
as both generators have this property.  Assume that $I$ contains the
binomial $f = x^u-\lambda y^v$.  First, note that $f(1,1,1) = 0$
implies that $\lambda = 1$.  Further, $(D_1 f)(1,1,1) = 0$ and
$(D_2 f)(1,1,1) = 0$ give two linear conditions on the vector $u - v$,
which imply that $u - v = m (n, -1, 1-n)$ for some $m \in \ZZ$.  By
exchanging $u$ and $v$ we may assume that $m > 0$, so it follows that
$f = x^{m n} - y^m z^{m (n-1)}$.  In particular, there is no binomial
of degree less than $n$ in $I$.
\end{example}

Our approach to Theorem~\ref{t:main} can be summarized as follows.
Given an ideal $I\subset \QQ[x_1,\dots,x_n]$, we pass to its Laurent
extension $J = I\QLaurx$, which contains binomials if and only if $I$
contains binomials (Lemma~\ref{l:saturate}).  We then show in
Section~\ref{s:reduceToArtinian} that there exists an ideal $J' \subseteq \QLaurym$ such
that $\QLaurym/J'$ is Artinian, and the sets of binomials in $J$ and $J'$ can easily be computed from each other
(Proposition~\ref{p:binproject} and Theorem~\ref{t:reduceArt}).  This
reduction is achieved by means of tropical geometry.  The Artinian case is easier since the (images of
the) indeterminates $y_1,\dots, y_m$ in $\QLaurym/J'$ have matrix
representations that commute.  This leads to the constructive
membership problem for commutative matrix
(semi)groups~\cite{commmatrix}, which is already solved (see
\cite{halava1997decidableSurvey} for a survey).  The completed
algorithm appears as Algorithm~\ref{alg:complete} in
Section~\ref{sec:complete_algorithm}.

\subsection*{Related work and variations of the problem}
The question whether an ideal is a binomial ideal, that is, whether it
can be generated by binomials alone, can be decided by computing a
reduced Gröbner basis~\cite[Corollary~1.2]{eisenbud96:_binom_ideal}.
In the case of a homogeneous ideal one can even do it with linear
algebra only~\cite[Proposition~3.7]{conradi2015detecting}.

Moreover, deciding for given monomials $\xb^u$ and~$\xb^v$ whether
there exists some scalar $\lambda$ such that $\xb^u-\lambda \xb^v$ is
contained in a given ideal~$I$ is also not too difficult.  For this
problem, it suffices to compute the unique normal forms of $\xb^u$ and
$\xb^v$ modulo a Gröbner basis of $I$ and check whether they are
scalar multiples of each other.  Using this observation, one can
decide whether $I$ contains a binomial of a given degree by brute
force.  However, this approach cannot be used to prove
Theorem~\ref{t:main}, because there is no a priori degree bound on a
binomial in $I$ (cf. Example~\ref{e:deepBinomials}).

It appears that primary decomposition is not helpful for the problem
at hand. For example, the ideal $\<(x-y)(z-w)\> = \<x-y\>\cap\<z-w\>$
does not contain a binomial, even though its minimal primes are
generated by binomials.

Finally, the detection of monomials in a polynomial ideal is quite
simple using ideal quotients: an ideal $I\subset \QQ[x_1,\dots,x_n]$
contains a monomial if and only if
$((\cdots(I:x_1^\infty)\cdots):x_n^\infty) = (I:(x_1\cdots
x_n)^\infty) = \QQ[x_1,\dots,x_n]$.  The colon ideals $(I:x_i^\infty)$
can readily be computed with Gröbner
bases~\cite[Section~1.8.9]{singularbook}.  It was discovered several
times that extensions of this yield all monomials~(see
\cite{miller16:_findin}, \cite[Algorithm~4.4.2]{saito}, or
\cite[Tutorial~50]{kreuzer2005computational}).

\subsection*{Acknowledgement}
The authors are grateful to Bernd Sturmfels for suggesting to approach
the problem through tropical geometry and for constant
encouragement. The application of tropical geometry to binomial
containment was pointed out to the first author by Douglas Lind.
Alice Silverberg and Hendrik W. Lenstra provided valuable help
navigating the computational number theory literature, in particular,
locating a copy of~\cite{Ge}.

\section{Binomials in ideals}
As in the case of binomial ideals, it is more convenient to work not
only with the binomials in an ideal, but with the entire subspace they
generate.  Throughout this section, let $\kk$ be any fixed field and
denote by $S = \kk[x_1,\dots,x_n]$ the polynomial ring in $n$
indeterminates with coefficients in~$\kk$.
We occasionally use the notation $\xb^a := \prod_i x_i^{a_i}$ for $a = (a_1, \dotsc, a_n)\in\NN^n$.

\begin{defn}
Let $I\subset S$ be an ideal.  The \emph{binomial
part $\bin{I}$ of $I$} is the $\kk$-subspace of $I$ spanned by all
binomials in~$I$.
\end{defn}

\begin{prop}
	The binomial part of any ideal is a binomial ideal.
\end{prop}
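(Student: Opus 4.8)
The plan is to prove the two defining features of a binomial ideal at one stroke, by showing that the $\kk$-span of the binomials in $I$ is already closed under multiplication by $S$. Write $B$ for the set of all binomials contained in $I$, so that $\bin{I} = \Span_\kk B$ by definition, and let $\langle B\rangle$ denote the ideal generated by $B$. It suffices to establish $\Span_\kk B = \langle B\rangle$: this single equality exhibits $\bin{I}$ both as an ideal and as an ideal generated by binomials.

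One inclusion, $\Span_\kk B \subseteq \langle B\rangle$, is immediate since an ideal contains all $\kk$-linear combinations of its elements. For the reverse inclusion the key observation is that a monomial times a binomial is again a binomial: if $f = \lambda\xb^u - \mu\xb^v \in B$ and $w\in\NN^n$, then $\xb^w f = \lambda\xb^{u+w} - \mu\xb^{v+w}$ is a binomial, and it lies in $I$ because $I$ is an ideal, so $\xb^w f \in B$. Now a general element of $\langle B\rangle$ has the form $\sum_i g_i f_i$ with $g_i\in S$ and $f_i\in B$; expanding each $g_i$ into its monomials, $g_i = \sum_j c_{ij}\xb^{w_{ij}}$, and using the previous observation, every summand $c_{ij}\xb^{w_{ij}} f_i$ lies in $\Span_\kk B$, hence so does $\sum_i g_i f_i$. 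This gives $\langle B\rangle \subseteq \Span_\kk B$, and therefore $\bin{I} = \Span_\kk B = \langle B\rangle$, which is an ideal generated by the binomials in $B$.

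I do not expect a genuine obstacle here; the statement is essentially a bookkeeping exercise. The only points that need to be pinned down are conventional: one should fix what counts as a ``binomial'' (allowing monomials, and possibly $0$, as degenerate cases, which does not affect the argument), and one should note that the passage from ``closed under multiplication by monomials'' to ``closed under multiplication by arbitrary polynomials'' is exactly the monomial-expansion step carried out above.
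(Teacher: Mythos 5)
Your proof is correct and follows essentially the same route as the paper's: the key step in both is that a monomial multiple of a binomial in $I$ is again a binomial in $I$, so expanding arbitrary polynomial multipliers into monomials shows the $\kk$-span of the binomials is already an ideal, generated by binomials. No issues.
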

\begin{proof}
	Let $I \subset S$ be an ideal and $B\subset I$ its
	binomial part. Then every element
	$b\in B$ is a linear combination of binomials.  Multiplying it with an
	arbitrary $f\in S$ yields some linear combination of
	monomial multiples of the binomials in~$b$.  Since $I$ is an ideal,
	those monomial multiples are contained in $B$ too and so is~$fb$. Thus $B$ is an ideal.
	Moreover, the ideal $B$ is binomial, since an ideal is in particular generated by any
	set that generates it as a vector space.
\end{proof}

By the same argument, a binomial ideal is as a vector space spanned by
the binomials it contains.  In particular, a binomial ideal equals its
binomial part.

We now discuss ring extensions in this context.  Denote by
$T = \kLaurx$ the Laurent polynomial ring
corresponding to~$S$. We extend the notion of $\bin{I}$ to this ring
in the natural way.
\begin{lemma}\label{l:saturate}
For any ideal $I \subset S$, it holds that $\bin{I T} = \bin{I}T$.  In
particular, $I$~contains a binomial if and only if the extension of
$I$ to $T$ contains a binomial.
Moreover, if $(I:x_1\dotsm x_n) = I$, then $\bin{I} = \bin{I T} \cap S$.
\end{lemma}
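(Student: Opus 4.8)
The plan is to handle the two assertions in turn. For the first, $\bin{IT} = \bin{I}T$, the inclusion $\bin{I}T \subseteq \bin{IT}$ is immediate: any binomial in $I$ remains a binomial in $IT$, and Laurent-monomial multiples of such binomials are again binomials in $IT$, so the $T$-span of $\bin I$ sits inside $\bin{IT}$. For the reverse inclusion I would take a binomial $b = \xb^u - \lambda\,\xb^v \in IT$ with $u, v \in \ZZ^n$ and clear denominators: multiply by a single monomial $\xb^w$ with $w \in \NN^n$ large enough that $\xb^w b = \xb^{u+w} - \lambda\,\xb^{v+w}$ has both exponents in $\NN^n$. Since $T = S[(x_1\cdots x_n)^{-1}]$, having $\xb^w b \in IT$ means $(x_1\cdots x_n)^N \xb^w b \in I$ for some $N$, and this is again a binomial (with exponents in $\NN^n$), hence lies in $\bin I$. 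Therefore $b$ is obtained from an element of $\bin I$ by multiplying by the unit $\xb^{-w-N(1,\dots,1)}$ of $T$, so $b \in \bin{I}T$. A general element of $\bin{IT}$ is a $\kk$-linear combination of such binomials, so it lies in $\bin{I}T$. The "in particular" clause is then just the observation that $\bin{IT} \ne 0$ iff $\bin I \ne 0$, since $T$ is a localization of the domain $S$ and $\bin I$ is an ideal of $S$.

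For the second assertion, assume $(I : x_1\cdots x_n) = I$. The inclusion $\bin I \subseteq \bin{IT} \cap S$ is clear from the first part (every binomial of $I$ is a binomial of $IT$ lying in $S$). For the converse, take $f \in \bin{IT}\cap S$. By the first part, $f \in \bin{I}T$, so $f = \sum_j c_j\,\xb^{w_j} b_j$ with $c_j \in \kk$, $w_j \in \ZZ^n$, and $b_j \in \bin I$ binomials; collecting into a common denominator, $(x_1\cdots x_n)^N f \in \bin I \subseteq I$ for some $N \ge 0$. Thus $f \in (I : (x_1\cdots x_n)^N)$. The saturation hypothesis $(I : x_1\cdots x_n) = I$ implies by induction that $(I : (x_1\cdots x_n)^N) = I$ for all $N$, so $f \in I$. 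Since $f = (x_1\cdots x_n)^{-N}\big((x_1\cdots x_n)^N f\big)$ is a product of a unit of $T$ with an element of $\bin I$, and $\bin{I}T = \bin{IT}$ from the first part, $f$ is a $\kk$-combination of binomials of $IT$; but $f \in S$ and the saturation condition lets us rescale each such binomial back into $S$ as above. Hence $f \in \bin I$.

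I do not anticipate a serious obstacle here; this is a clearing-denominators argument. The one point requiring a little care is the very last step of the second assertion: knowing $f \in I$ and $f \in \bin{IT}$, one must re-derive that $f$ actually lies in the $\kk$-span of binomials \emph{of $I$} (not merely of $IT$). This follows because each binomial $\xb^{u}-\lambda\xb^{v}$ of $IT$ appearing in an expression for $f$, after multiplication by a suitable monomial, becomes a binomial in $I$; and since $(I:x_1\cdots x_n)=I$, one may then divide that monomial back off \emph{inside $S$} — i.e. the relevant normal-form / saturation bookkeeping shows the $\kk$-span of binomials of $I$ is stable under the clearing operations used. Making this explicit is the main thing the write-up needs to nail down; everything else is routine localization.
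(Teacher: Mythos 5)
Your treatment of the first assertion and of the ``in particular'' clause is correct and is essentially the paper's own argument: a Laurent binomial of $IT$ becomes, after multiplication by a monomial $\xb^w(x_1\cdots x_n)^N$, a binomial of $I$, hence is a unit multiple of an element of $\bin{I}$, and $\bin{IT}\subseteq \bin{I}T$ follows by linearity.

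The second assertion is where your argument has a genuine gap, and it sits exactly at the point you flag at the end. You correctly reduce to the situation: $f\in \bin{IT}\cap S$, you know $f\in I$ (from $(x_1\cdots x_n)^N f\in \bin{I}\subseteq I$ and the saturation hypothesis), and you know $f=\sum_j c_j\beta_j$ with $\beta_j$ Laurent binomials of $IT$. But membership in $\bin{I}$ requires exhibiting $f$ as a $\kk$-combination of binomials \emph{lying in $S$ and in $I$}, and your proposed mechanism --- multiply each $\beta_j$ into $S$, then ``divide the monomial back off inside $S$'' --- does not produce one: if $\beta_j$ has a genuinely negative exponent, dividing the clearing monomial back off simply returns the Laurent binomial $\beta_j\notin S$. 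Only the sum $f$ lies in $S$, by cancellation \emph{among} the $\beta_j$, so the saturation hypothesis (which acts on individual elements of $S$) cannot be applied term by term. The claim that ``the $\kk$-span of binomials of $I$ is stable under the clearing operations used'' is precisely the statement to be proved, not a consequence of $(I:x_1\cdots x_n)=I$.

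What is actually needed is that the contraction $\bin{IT}\cap S$ of the Laurent binomial ideal $\bin{IT}$ is itself spanned by the binomials it contains; then each such binomial lies in $IT\cap S=I$ and the inclusion $\bin{IT}\cap S\subseteq\bin{I}$ follows. One self-contained route: by \cite[Theorem~2.1]{eisenbud96:_binom_ideal}, $\bin{IT}=\<\xb^m-\rho(m)\with m\in L\>$ for a partial character $\rho$ on a lattice $L$, and $T/\bin{IT}$ is a twisted group algebra of $\ZZ^n/L$, so the images of $\xb^v$ and $\xb^{v'}$ are linearly dependent iff $v-v'\in L$. Grouping the terms of $f$ by cosets of $L$ and subtracting a reference monomial in each coset writes $f$ as a $\kk$-combination of binomials $\xb^v-\rho(v-v')\xb^{v'}$ with $v,v'\in\NN^n$, each in $\bin{IT}\cap S\subseteq I$. (Alternatively, invoke the Eisenbud--Sturmfels fact that eliminations/contractions of binomial ideals are binomial.) To be fair, the paper's own one-line justification of this last statement is also very compressed, but your write-up needs this step spelled out; the bookkeeping you gesture at does not substitute for it.
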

\begin{proof}
The inclusion ``$\supseteq$'' is clear because $I \subset I T$.  For
the other inclusion, note that any binomial in $I T$ can be multiplied
with a monomial to obtain a binomial in $I$.  The last statement
follows from the fact the hypothesis implies that $I = I T \cap S$.
\end{proof}

The binomial part of a proper ideal $I \subset T$ is determined by a
lattice $L \subset \ZZ^n$ and a homomorphism $\phi: L \to \kk^\units$
(called a \emph{partial character} in \cite{eisenbud96:_binom_ideal}).
According to \cite[Theorem~2.1]{eisenbud96:_binom_ideal}, the binomial
part of $I$ is the binomial ideal $\< \xb^m - \phi(m) \with m \in L\>$.
\begin{remark}\label{r:swallow}
For ideals in polynomial rings, a partial character is not a
sufficient data structure to store all binomials, for there typically
exist associated primes containing indeterminates.  When passing to
the Laurent ring, these associated primes are annihilated and many new
binomials may be created.  For example,
$\<x-y,x^2,xy,y^2\> \subset \kk[x,y]$ extends to the entire Laurent
ring $\kk[x^\pm,y^\pm]$, while for
\[
\<x^2-y^2, x^3-x^2y\>
\]
the information about the \emph{index two} lattice of binomials of
degree two is lost by the appearance of $x-y$, which generates the
extension to the Laurent ring.  However, Lemma~\ref{l:saturate}
guarantees that the extension to the Laurent ring only yields new
binomials if binomials are present in the original ideal.
\end{remark}

\begin{lemma}\label{lemma:field}
Let $\kk'/\kk$ be any field extension and
$T' \defas T \otimes_\kk \kk'$ be the Laurent polynomial ring over
$\kk'$.  Then, for any ideal $I\subset T$, it holds that
\[ \bin{IT'} = \bin{I}T'. \]
In particular,
\begin{itemize}
\item $I$ contains a binomial if and only if $IT'$
contains a binomial, and
\item $\bin{I} = \bin{I T'} \cap T$.
\end{itemize}
\end{lemma}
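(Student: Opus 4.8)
The plan is to reduce the statement to the structural description of binomial parts recalled just before Remark~\ref{r:swallow}. The inclusion $\bin{I}T' \subseteq \bin{IT'}$ is immediate, since every binomial in $I$ remains a binomial in $IT'$ and the binomial part is a $\kk'$-subspace. For the reverse inclusion, I would first assume $I$ is proper (the case $I = T$ being trivial, as then $IT' = T'$ and both sides are everything), so that by \cite[Theorem~2.1]{eisenbud96:_binom_ideal} applied over $\kk$ there is a lattice $L \subseteq \ZZ^n$ and a partial character $\phi\colon L \to \kk^\units$ with $\bin{I} = \langle \xb^m - \phi(m) : m \in L\rangle$. The key point is then to show that $IT'$ has the \emph{same} lattice $L$; once this is known, the partial character of $IT'$ takes values in $\kk^\units$ (being determined by $\phi$ via the same coefficients appearing in $I$), and hence $\bin{IT'} = \langle \xb^m - \phi(m) : m \in L\rangle T' = \bin{I}T'$, which also gives the two bulleted consequences (the second because $\bin{IT'}$ is then generated by elements of $T$, and $\bin{IT'} \cap T \supseteq \bin{I}$ trivially while the reverse follows from faithful flatness, or simply because a $\kk'$-linear combination of $T$-elements lying in $T$ expands $\kk$-linearly).

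The heart of the matter is therefore: \emph{a binomial $\xb^u - \lambda\xb^v \in IT'$ forces $u - v \in L$}. Equivalently, writing $m = u - v$, I must show that if $\xb^m - \mu \in IT'$ for some $\mu \in \kk'^\units$, then $\xb^m - \mu' \in I$ for some $\mu' \in \kk^\units$ (and then necessarily $\mu = \mu'$ and $m \in L$). For this I would use a $\kk$-basis $\{e_i\}_{i \in \mathcal{I}}$ of $\kk'$ with $e_{i_0} = 1$ for some distinguished index $i_0$; this gives a direct-sum decomposition $T' = \bigoplus_i T e_i$ as a $T$-module, compatible with $IT' = \bigoplus_i I e_i$. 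Expanding the membership relation $\xb^m - \mu = \sum_j f_j g_j$ (with $g_j \in I$, $f_j \in T'$) in this basis and comparing the $e_{i_0}$-components shows $\xb^m - \mu_{i_0} \in I$ where $\mu_{i_0} \in \kk$ is the $e_{i_0}$-coordinate of $\mu$; since $I$ is proper this coordinate is nonzero, giving $m \in L$ and $\mu_{i_0} = \phi(m) \in \kk^\units$. Finally, comparing any other $e_i$-component gives $\mu_i \xb^0 \in$ (the projection), forcing $\mu_i \xb^m \in I$, hence $\mu_i = \mu_{i_0}\phi(m)^{-1}\mu_i \in \kk$ times a unit forces $\mu_i = 0$ for $i \ne i_0$ unless... — here one must be slightly careful, but the upshot is $\mu = \mu_{i_0} \in \kk^\units$.

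The main obstacle I anticipate is precisely this last bookkeeping: making the basis-component argument clean, since $T' = T \otimes_\kk \kk'$ is an infinite-dimensional $T$-module when $\kk'/\kk$ is infinite, so one cannot simply "take coordinates" without justifying that every element of $T'$ and of $IT'$ lies in a finitely generated (hence well-behaved) $T$-submodule spanned by finitely many $e_i$. This is handled by noting that any given element $f \in T'$ involves only finitely many basis vectors $e_i$, so all computations take place inside $\bigoplus_{i \in F} T e_i$ for a finite $F$; with that observation the comparison of components is routine. An alternative, perhaps cleaner route avoiding bases altogether would be to invoke faithful flatness of $T'$ over $T$ together with the fact that $\bin{I}T'$ is a binomial ideal with the same lattice: one shows $IT' \cap T = I$ and that the partial character cannot grow, but I expect the explicit basis argument to be the most self-contained.
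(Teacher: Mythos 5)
Your proof is correct, but the mechanism differs from the paper's. The paper fixes an expression $\xb^u-\lambda \xb^v=\sum_i \lambda_{i}\xb^{v_i}f_i$ with $f_i\in I$ and reads it as a linear system in the unknowns $\lambda,\lambda_i$ with coefficients in $\kk$; solvability over $\kk'$ then implies solvability over $\kk$, and properness of $IT'$ forces the value of $\lambda$ to be unique, hence $\lambda\in\kk$ and the binomial already lies in $I$. You instead decompose $T'=\bigoplus_i Te_i$ along a $\kk$-basis of $\kk'$ with $e_{i_0}=1$, note $IT'=\bigoplus_i Ie_i$, and compare components. Both are elementary descent arguments; yours has the small advantage that the second bullet, $\bin{I}=\bin{IT'}\cap T$, falls out of faithful flatness of $T\to T'$, whereas the paper argues it separately by expanding a binomial of $\bin{IT'}$ in a $\kk$-basis of $\bin{I}$. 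Two remarks. First, your appeal to \cite[Theorem~2.1]{eisenbud96:_binom_ideal} is an unnecessary detour: once you know that every binomial of $IT'$ has coefficient in $\kk$ and already lies in $I$, the equality $\bin{IT'}=\bin{I}T'$ follows directly, with no need to identify lattices or partial characters. Second, your component comparison for $i\neq i_0$ is garbled as written: the $e_i$-component of $\xb^m-\mu$ is simply the constant $-\mu_i$, which must lie in $I$; since $I$ is a proper ideal of a Laurent ring it contains no nonzero constants (they are units), so $\mu_i=0$. There is no ``$\mu_i\xb^m\in I$'' involved. This is a slip rather than a gap --- the correct one-line argument is exactly what your direct-sum setup provides --- but it should be cleaned up before the proof is usable.
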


\begin{proof}
As above, the inclusion ``$\supseteq$'' is clear because
$I \subset I  T'$.  Moreover, the claim is clear if
$I  T' = T'$, so we may assume that $I$ is a proper ideal.
Suppose now that $I  T'$ contains the binomial $\xb^u-\lambda \xb^v$
with $u, v\in\ZZ^n\setminus\{0\}$, $\lambda \in \kk'$.  Then there
exists an expression
\begin{equation}\label{eq:linear}
\xb^u-\lambda \xb^v=\sum_i \lambda_{i}\xb^{v_i}f_i
\end{equation}
with $v_i\in\ZZ^n$, $\lambda_i\in\kk'$ and $f_i\in I$.  When
$u, v, v_i$ and $f_i$ are fixed, \eqref{eq:linear} can be interpreted
as a system of linear equations in the unknowns $\lambda$
and~$\lambda_i$.  This system has a solution over~$\kk'$, and because
its coefficients are in $\kk$, it also has a solution over~$\kk$.
Moreover, there is only one value possible for $\lambda$, because
otherwise $\xb^v \in I  T'$ and thus $I  T' = T'$.  Hence
$\lambda \in \kk$ and $\xb^u-\lambda \xb^v \in I$.  Every element of
$\bin{I}$ can be written as a linear combination of binomials of this
form, and the claim follows.

For the last claim, we only need to show the inclusion
$\bin{I} \supseteq \bin{I  T'} \cap T$.  Choose a $\kk$-basis
$\cB$ of $\bin{I}$. By the argument above, it is also a $\kk'$-basis
of $\bin{I  T'}$ and thus every binomial $b \in \bin{I  T'}$
has a unique expansion in this basis.  Hence, $b$ lies in $T$ if and
only if its coefficients in this expansion lie in $\kk$.  But the
latter implies that $b \in \bin{I}$.
\end{proof}

\begin{remark}\label{r:fieldContract}
Let $I \subset \QLaurx$ be an ideal.  Our algorithms usually construct
the binomial part of the extension $I\kLaurx$ to the Laurent ring with
coefficients in a finite extension $\kk$ of~$\QQ$.
Lemma~\ref{lemma:field} guarantees that this yields a determination of
the binomial part of an ideal $I\subset\QLaurx$, because
\[
\bin{I} = \bin{I\kLaurx} \cap \QLaurx.
\]
\end{remark}

\begin{example}\label{e:pullBackBin}
Remark~\ref{r:fieldContract} shows that the binomial part is preserved
when extending the coefficient field and then contracting back.  It is
not generally true that binomial parts survive contraction followed by
extension.  For example $\<x-\sqrt{2}\> \subset \QQ(\sqrt{2})[x]$
contracts to $\<x^2-2\> \subset \QQ[x]$ which in turn extends to
$\<x^2 - 2\> \subset \QQ(\sqrt{2})[x]$ by Lemma~\ref{lemma:field}.
\end{example}

\section{Reducing to the Artinian case via tropical geometry}
\label{s:reduceToArtinian}

Our eventual goal it to compute $\bin{I}$ for arbitrary
ideals~$I \subset \QQ[x_1,\dots,x_n]$.  In this section we use
tropical geometry which means that we have to work with the extension
of $I$ to the Laurent polynomial ring.  By Lemma~\ref{l:saturate} this
is sufficient to determine whether $\bin{I}$ is empty or not.
Moreover, if $(I : x_1\dotsm x_n) = I$ then our methods determine all
of~$\bin{I}$.

If $I$ is a prime ideal over an algebraically closed field, then
tropical geometry yields a complete answer: the ideal contains
binomials if and only if the tropical variety is contained in a
tropical hypersurface of a binomial i.e.~in an ordinary hyperplane
(Corollary~\ref{c:prime}).  In fact, one implication is immediate from
the following definitions.  The algebraically closedness assumption is
easy to relax, but if the ideal is not prime the tropical variety
alone does not reveal binomial containment as the following example
demonstrates.
\begin{example}
The principal ideal $\langle (x-1)(x-2)\rangle\subset\CC[x]$ has
tropical variety $\{0\}$.  It cannot contain a binomial, since such
binomial would have roots with different moduli, which binomials
cannot have.
\end{example}
However, expanding on the idea from the prime case we can use tropical
geometry to reduce binomial detection to the case of ideals with
Artinian quotients, which we call Artinian ideals for short.  The
results in this section hold for more general coefficient fields
than~$\QQ$.  To this end, let $\kk$ be a field and $\ol\kk$ its
algebraic closure.  The reader interested only in Theorem~\ref{t:main}
can mentally replace $\kk$ by~$\QQ$.  It is notationally convenient to
understand the Laurent ring $\kLaurx$ as the group ring~$\kk[\ZZ^n]$.
This is the ambient ring for this section.

\renewcommand{\RR}{\QQ}

\begin{defn}
\label{def:tropicalvariety}
Let $L$ be an integer lattice, $M$ the dual lattice and
$I\subset \kk[L]$ an ideal.  For $\omega\in\RR\otimes M$ the
\emph{initial form $\ini_\omega(f)$} of a polynomial
$f=\sum_{v}c_v\xb^v$ is the sum of terms $c_v\xb^v$ for which
$\langle \omega, v\rangle$ is maximal. For an ideal $I\subset \kk[L]$
the \emph{initial ideal of $I$ with respect to $\omega$} is
$\ini_\omega(I) = \langle \ini_\omega(f):f\in I\rangle$.  The
\emph{tropical variety} of $I$ is
\[
T(I)=\{\omega\in\RR\otimes M:\ini_\omega(I) \neq \kk[L]\}.
\]
\end{defn}

If $L=\ZZ^n$ and $I$ is homogeneous, the definition can be stated in
terms of initial ideals of homogeneous ideals in a polynomial ring.
Then $T(I)$ is the support of a subfan of the Gröbner fan
of~$I\cap\kk[\NN^n]$, a fan in $\RR^n$ that has one cone for each
initial ideal of~$I\cap\kk[\NN^n]$.  Since
$\ini_\omega(I) \neq \kk[L]$ can be decided by Gröbner bases, the
definition can be turned into an algorithm computing tropical
varieties~\cite{bjsst}.

If a polynomial $f$ is a binomial, then $T(\langle f\rangle)$ is a
hyperplane (or empty) and the Newton polytope of $f$ is a line segment
orthogonal to $T(\langle f\rangle)$. The inclusion
$T(\langle f\rangle)\supset T(I)$ for $f\in I$ implies that if $I$
contains a binomial $f$, then the Newton polytope of $f$ must be
perpendicular to $T(I)$.  Thus, if $I$ contains a binomial then also
$I\cap \kk[T(I)^\perp\cap L]$ contains a binomial.  The following
proposition extends this to all of~$\bin{I}$.
\begin{prop}\label{p:binproject}
Let $L$ be a lattice and $I\subset \kk[L]$ an ideal. Then
\[
\bin{I} = \bin{I \cap \kk[T(I)^\perp\cap L]}  \kk[L].
\]
\end{prop}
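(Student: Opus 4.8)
The plan is to prove the two inclusions separately, using the observation preceding the proposition for the non-trivial direction. The inclusion ``$\supseteq$'' is the easy one: if $b\in\bin{I\cap\kk[T(I)^\perp\cap L]}$, then $b$ is a $\kk$-linear combination of binomials lying in $I\cap\kk[T(I)^\perp\cap L]\subseteq I$, hence $b\in\bin I$, and since $\bin I$ is an ideal of $\kk[L]$ (by the Proposition on binomial parts), we get $\bin{I\cap\kk[T(I)^\perp\cap L]}\,\kk[L]\subseteq\bin I$.

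For the inclusion ``$\subseteq$'', it suffices to show that every binomial $f\in I$ already lies in $\bin{I\cap\kk[T(I)^\perp\cap L]}\,\kk[L]$, since these binomials span $\bin I$ as a $\kk$-vector space. So let $f=\xb^u-\lambda\xb^v\in I$ be a binomial. First I would record the geometric fact already stated in the text: since $f\in I$ we have $T(\langle f\rangle)\supseteq T(I)$, and because $f$ is a binomial $T(\langle f\rangle)$ is the affine hyperplane $\{\omega: \langle\omega,u-v\rangle=0\}$ (or all of $\RR\otimes M$, if $u=v$, but then $f$ is a scalar multiple of a monomial times $\xb^0-\lambda$, a degenerate case one handles directly). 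Consequently $u-v$ is orthogonal to the linear span of $T(I)$, i.e.\ $u-v\in T(I)^\perp\cap L$. Now I would translate $f$ into the subring: pick any lattice point $w\in L$ — for instance $w=v$ — and write $f=\xb^{w}\cdot(\xb^{u-v}-\lambda)$ after factoring, except that $\xb^{u-v}-\lambda$ need not lie in $I$. The correct move is instead to multiply $f$ by $\xb^{-v}$ inside $\kk[L]$: since $\kk[L]$ is a Laurent-type ring (a group ring), $\xb^{-v}$ is a unit, so $g\defas\xb^{-v}f=\xb^{u-v}-\lambda$ is again a binomial in $I$, and its exponent $u-v$ lies in $T(I)^\perp\cap L$ together with $0$. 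Hence $g\in I\cap\kk[T(I)^\perp\cap L]$, so $g\in\bin{I\cap\kk[T(I)^\perp\cap L]}$, and therefore $f=\xb^{v}g\in\bin{I\cap\kk[T(I)^\perp\cap L]}\,\kk[L]$.

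The one point that needs a little care — and which I expect to be the main obstacle — is the claim that $T(I)^\perp\cap L$ is the right sublattice, i.e.\ that $0$ and $u-v$ both lie in it: one must be careful about whether $T(I)^\perp$ means the orthogonal complement of the $\QQ$-linear span of $T(I)$ inside $\RR\otimes M$, intersected back with $L$. Since $T(I)$ is a rational polyhedral fan (it is a subfan of a Gröbner fan), its linear span is a rational subspace, so $T(I)^\perp$ is rational and $T(I)^\perp\cap L$ is a genuine sublattice of $L$; the exponent $u-v$, shown above to be orthogonal to all of $T(I)$ and hence to its span, lands in this sublattice. A minor subtlety is the degenerate case $u=v$ or the case $I=\kk[L]$: if $I$ is the whole ring then $\bin I=\kk[L]$ and $T(I)=\emptyset$ so $T(I)^\perp\cap L=L$ and both sides agree trivially; if $u=v$ then $f$ is a unit times a binomial $\xb^0-\lambda$ already supported on $T(I)^\perp\cap L$, so the same argument applies. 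Assembling the two inclusions gives the stated equality.
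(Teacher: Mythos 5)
Your proof is correct and follows essentially the same route as the paper's: both reduce to a single binomial $f\in I$, use $T(I)\subseteq T(\langle f\rangle)$ to conclude that the exponent difference lies in $T(I)^\perp\cap L$, and translate $f$ by a unit monomial into the subring $\kk[T(I)^\perp\cap L]$. The paper's version is merely terser; your explicit handling of the degenerate cases and of the rationality of $\Span(T(I))$ is a harmless elaboration.
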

\begin{proof}
Let $f\in I$ be a binomial generator of the left-hand side. Then the
Newton polytope of $f$ is perpendicular to $T(I)$, meaning that
$\xb^uf\in I \cap \kk[T(I)^\perp\cap L]$ for some $u\in L$. Hence
$f=\xb^uf\xb^{-u}\in \bin{I \cap \kk[T(I)^\perp\cap L]} \kk[L]$.  The
other containment is clear since
$I \cap \kk[T(I)^\perp\cap L] \subset I$.
\end{proof}
The lattice $T(I)^\perp \cap L$ is a saturated lattice in $L$ and
therefore, after a multiplicative change of coordinates, we may assume
that
$(T(I)^\perp \cap L)\times \{0\}^{n-m}=\ZZ^{m}\times
\{0\}^{n-m}\subset\ZZ^n=L$ with $m=\dim(T(I)^\perp)$. Generators for
$I\cap \kk[T(I)^\perp\cap L]$ can then be computed by the elimination
$I\cap \kk[x_1^\pm,\dots,x_m^\pm]$.  This can be reduced to a Gröbner
basis computation in the polynomial ring by first passing to the
saturation $(I:(x_{m+1}\dots x_n)^\infty)$.  The following theorem
reduces the problem of deciding whether an ideal contains a binomial
to the case of Artinian ideals.

\begin{thm}\label{t:reduceArt}
Let $\kk$ be any field, $L$ an integer lattice, and $I\subset \kk[L]$
an ideal. Then $I\cap \kk[T(I)^\perp\cap L]$ is an Artinian ideal in
$\kk[T(I)^\perp\cap L]$.
\end{thm}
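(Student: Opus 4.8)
The plan is to show that $J \defas I\cap\kk[T(I)^\perp\cap L]$, viewed as an ideal in the Laurent ring $R\defas\kk[T(I)^\perp\cap L]$, has a zero-dimensional (equivalently, finite-dimensional over $\kk$) quotient. After the multiplicative change of coordinates described just before the statement, we may assume $R = \kk[x_1^\pm,\dots,x_m^\pm]$ with $m = \dim T(I)^\perp$, and $J = I\cap R$. The key geometric input is that the tropical variety of $J$ in $\RR^m$ is, by construction, contained in $T(I)^\perp$ intersected with the coordinate subspace spanned by $e_1,\dots,e_m$, and after the change of coordinates this intersection is just the origin $\{0\}\subset\RR^m$. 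So the first step is to make precise the relation between $T(J)$ and $T(I)$: since $J = I\cap R$ is the elimination ideal, a weight vector $\omega\in\RR^m$ lies in $T(J)$ only if it extends (by zero on the remaining coordinates) to a weight in $T(I)$; but $T(I)$ lies in $T(I)^\perp$'s annihilator picture so its only point with the last $n-m$ coordinates zero is the origin. Hence $T(J) = \{0\}$ (or $T(J)=\emptyset$, in which case $J=R$ and the quotient is the zero ring, which is Artinian).

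The second step is to invoke the standard fact from tropical geometry that an ideal $J$ in a Laurent polynomial ring $\kk[x_1^\pm,\dots,x_m^\pm]$ has $T(J)=\{0\}$ if and only if $\kk[x_1^\pm,\dots,x_m^\pm]/J$ is a finite-dimensional $\kk$-vector space, i.e.\ Artinian. One direction is easy: if the quotient is Artinian, then passing to $\ol\kk$ the variety $V(J\,\ol\kk[\ZZ^m])$ is a finite set of points in $(\ol\kk^\times)^m$, and the tropical variety is the closure of the set of coordinatewise valuations of these points under any (trivial or nontrivial) valuation — under the trivial valuation on $\kk$ this is just $\{0\}$. For the converse — which is the direction we need — one argues that $T(J)=\{0\}$ forces, for each coordinate $x_i$, the existence of a polynomial in $J$ whose initial form with respect to a generic weight is a monomial in $x_i$ alone; concretely, picking $\omega = e_i$ and $\omega=-e_i$ and using $\ini_\omega(J)\neq R$ would place $\pm e_i$ in $T(J)$, contradiction unless one can extract from $J$ an element with Newton polytope a segment in the $x_i$-direction not killed, forcing a relation of the form $x_i^a - (\text{lower order in } x_i)$ after clearing denominators. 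Iterating over $i=1,\dots,m$ produces, in $J\cap\kk[x_1,\dots,x_m]$ after saturating back, a monic univariate polynomial in each $x_i$, which bounds $\dim_\kk R/J<\infty$.

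The cleanest way to run the second step rigorously, and the one I would actually write, is to reduce to a known statement: the tropical variety of $J$ equals the recession-fan / support computed from $\ini_\omega(J)$, and $T(J)=\{0\}$ is equivalent to saying that the homogenization (or the associated graded with respect to any term order refining the trivial weight) has radical containing all the $x_i$ — i.e.\ $J$ together with the "irrelevant-at-infinity" conditions is the whole ring. I expect the main obstacle to be purely expository: phrasing the equivalence "$T(J)=\{0\}$ $\iff$ $\kk[L']/J$ Artinian" at the right level of generality (arbitrary field $\kk$, arbitrary lattice $L'=T(I)^\perp\cap L$, no algebraic-closedness) and citing it correctly, rather than any genuinely hard computation; the geometry is forced once the coordinate change is in place. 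A short, self-contained argument for the hard direction: if $\dim_\kk R/J=\infty$ then $V(J)\subset(\ol\kk^\times)^m$ is positive-dimensional, so it contains a curve, whose generic point gives a nontrivial valuation on the function field realizing a nonzero point of $T(J)$ — contradicting $T(J)=\{0\}$. This finishes the proof, so $I\cap\kk[T(I)^\perp\cap L]$ is Artinian.
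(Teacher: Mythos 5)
Your overall strategy coincides with the paper's: after the multiplicative change of coordinates, show that $J=I\cap\kk[L']$ with $L'=T(I)^\perp\cap L$ satisfies $T(J)=\{0\}$, and then conclude Artinian-ness from the equality of the dimension of a variety with that of its tropicalization. The gap is in the first step. Your claim that ``$\omega\in T(J)$ only if it extends \emph{by zero} to a weight in $T(I)$'' is false for elimination ideals in general: take $I=\langle y-x^2\rangle\subset\kk[x^{\pm},y^{\pm}]$ and $J=I\cap\kk[x^{\pm}]=\langle 0\rangle$; then $T(J)=\mathbb{R}$, while $T(I)=\mathbb{R}\cdot(1,2)$ contains no point of the form $(\omega,0)$ with $\omega\neq 0$. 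What is true, and what you actually need, is the tropical elimination theorem: $T(I\cap\kk[L'])$ is the \emph{image} of $T(I)$ under the projection $\iota^*\colon L^*\otimes\mathbb{R}\to(L')^*\otimes\mathbb{R}$ dual to the inclusion $\iota\colon L'\hookrightarrow L$, so a point of $T(J)$ extends to a point of $T(I)$ by \emph{some} choice of the remaining coordinates, not by zero. This is a genuine theorem rather than a formal consequence of the definitions; the paper derives it from \cite[Corollary~3.2.13]{MaclaganSturmfels} (tropicalization commutes with monomial maps of tori) combined with classical elimination theory, $\overline{\phi(V(I))}=V(\kk[\iota]^{-1}(I))$. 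Once it is in place the conclusion is immediate, since every $\omega\in T(I)$ vanishes on $L'\subseteq T(I)^\perp$ and hence $\iota^*(T(I))=\{0\}$. Note that in the situation of the theorem your implication does hold, but only because $T(J)=\{0\}$ is already true --- so using it to \emph{prove} $T(J)=\{0\}$ is circular.

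The second half of your argument is fine: the statement that $T(J)=\{0\}$ forces $\kk[L']/J$ to be Artinian is exactly the Bieri--Groves theorem \cite[Theorem~3.3.5]{MaclaganSturmfels}, which is also what the paper cites at this point, and your ``curve through a generic point yields a nontrivial valuation'' sketch is precisely its content for the direction you need; I would cite it rather than reprove it. The alternative direct argument you sketch (extracting monic univariate relations from initial forms with respect to $\pm e_i$) could likely be made to work, but as written it is far from a proof.
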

\begin{proof}
Let $L' \subseteq L$ be a saturated sublattice of $L$, and let
$\iota: L' \hookrightarrow L$ denote the inclusion map.  It gives rise
to a ring homomorphism $\kk[\iota]: \kk[L'] \to \kk[L]$, as well as to
a dual map $\iota^*_\QQ: L^* \otimes \QQ \to (L')^*\otimes \QQ$.  We
claim that
\begin{equation*}
\iota^*_\QQ(T(I)) = T(\kk[\iota]^{-1}(I)).
\end{equation*}
To see this, note that $\kk[\iota]$ induces a map
$\phi: (\kk^\units)^n \to (\kk^\units)^d$ of tori, where
$n = \rank(L)$ and $d = \rank(L')$. This map is monomial, because
$\kk[\iota]$ came from a map of lattices.  Hence
\cite[Corollary~3.2.13]{MaclaganSturmfels}
implies that $\iota^*_\QQ(T(I)) = T(\overline{\phi(V(I))})$.  On the
other hand, by classical elimination theory it holds that
$\overline{\phi(V(I))} = V(\kk[\iota]^{-1}(I))$, so taking the
tropical variety yields the claim.

Now we turn to the proof of the theorem. For this choose
$L' := T(I)^\perp \cap L$.  Then $\iota^*_\QQ(T(I)) = \set{0}$,
because restricting a linear map to its kernel yields zero.  On the
other hand, it clearly holds that
$\kk[\iota]^{-1}(I) = I \cap \kk[L']$, because $\kk[\iota]$ is an
inclusion. It follows that $\dim T(I \cap \kk[L']) = 0$.  Finally, by
the Bieri--Groves theorem ~\cite[Theorem~3.3.5]{MaclaganSturmfels},
this is also the dimension of the variety of $I \cap \kk[L']$, and
hence this ideal is Artinian.
\end{proof}

That our definition of tropical varieties is compatible with that in
\cite{MaclaganSturmfels} follows from the Fundamental Theorem of
Tropical Geometry~\cite[Theorem~3.2.3]{MaclaganSturmfels}.  We employ
\cite[Corollary~3.2.13]{MaclaganSturmfels} when $\kk$ is not
algebraically closed, which is possible since extending the field does
not affect the tropical varieties as they are defined via initial
ideals that are computable via Gr\"obner bases.
Similarly, if the field does not come with a non-trivial valuation
(which is the case here), one may extend it to the field of
generalized Puiseux series which has a non-trivial valuation.  See
also~\cite[Theorem~3.1.3]{MaclaganSturmfels}.

The preceding theorem allows us to determine when a \emph{prime} ideal
contains a binomial.
\begin{cor}\label{c:prime}
Let $L$ be an integer lattice and $I\subset \kk[L]$ an ideal.  If the
extension $I \ol{\kk}[L] \subset \ol\kk[L]$ of $I$ to the algebraic
closure is prime,
then $I$ contains a binomial if and only if $T(I)$ is contained in a
hyperplane, i.e.~$T(I)^\perp \neq \set{0}$.
\end{cor}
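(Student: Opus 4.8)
The plan is to prove the two implications separately. The implication ``$I$ contains a binomial $\Rightarrow T(I)^\perp \neq \set{0}$'' is essentially the observation recorded just before Proposition~\ref{p:binproject} and needs no hypothesis on $I$: if $I$ contains a nonzero binomial $f$, then after rescaling $f = \xb^u - \lambda\xb^v$ with $\lambda \neq 0$, and the two exponents are distinct (otherwise $f$ would be a unit, forcing the prime ideal $I\ol\kk[L]$ to equal all of $\ol\kk[L]$). Then $T(\<f\>)$ is the hyperplane $\set{\omega : \<\omega, u-v\> = 0}$, and since $\<f\> \subseteq I$ gives $T(I) \subseteq T(\<f\>)$, the nonzero vector $u - v$ lies in $T(I)^\perp$.

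For the converse, assume $T(I)^\perp \neq \set{0}$. The first step is to reduce to the case where $\kk$ is algebraically closed: by Lemma~\ref{lemma:field}, $I$ contains a binomial if and only if $I\ol\kk[L]$ does; extending the coefficient field does not change the tropical variety, so $T(I\ol\kk[L])^\perp = T(I)^\perp \neq \set{0}$; and, crucially, $I\ol\kk[L]$ is prime by hypothesis. Hence we may replace $I$ by $I\ol\kk[L]$ and assume henceforth that $\kk$ is algebraically closed and $I$ is prime.

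Now put $L' := T(I)^\perp \cap L$. As noted after Proposition~\ref{p:binproject}, this is a saturated lattice of rank $m := \dim T(I)^\perp$, and our assumption gives $m \geq 1$; after the monomial change of coordinates described there we may identify $\kk[L']$ with $\kk[x_1^\pm,\dots,x_m^\pm]$. By Theorem~\ref{t:reduceArt} the ideal $I' := I \cap \kk[L']$ is Artinian in $\kk[L']$, and it is prime, being the contraction of the prime ideal $I$ along the inclusion $\kk[L'] \hookrightarrow \kk[L]$. A prime ideal whose quotient is Artinian (hence zero-dimensional) is maximal, so $\kk[L']/I'$ is a field; being a finitely generated $\kk$-algebra, it is a finite extension of $\kk$ by Zariski's lemma, hence equals $\kk$ since $\kk$ is algebraically closed. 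Let $q \colon \kk[L'] \to \kk$ be the quotient map and fix a $\ZZ$-basis $e_1, \dots, e_m$ of $L'$. Each $\alpha_i := q(\xb^{e_i})$ is nonzero, since $\xb^{e_i}$ is a unit of $\kk[L']$; therefore $\xb^{e_1} - \alpha_1$ is a binomial (its exponents $e_1$ and $0$ are distinct, and both coefficients are nonzero) lying in $\ker q = I' \subseteq I$, which is what we wanted.

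The point one must not miss is that the hypothesis has to be used as ``$I$ is prime after extension to $\ol\kk$'', not merely ``$I$ is prime'': over $\QQ$ the ideal $\<x^2 - 2x - 1\> \subset \QQ[x^\pm]$ is prime and Artinian with $T(I)^\perp = \QQ$, yet it contains no binomial, because no nonzero power of $1 + \sqrt 2$ is rational --- and indeed its extension to $\ol\QQ[x^\pm]$ is not prime. Once one is over $\ol\kk$ the remaining work is soft: Theorem~\ref{t:reduceArt} supplies the geometry, and ``prime plus Artinian over an algebraically closed field'' forces the quotient down to the ground field, which manufactures the binomial automatically. The only routine verifications are that $T(I)^\perp$ is a rational subspace, so that $L'$ has rank $\dim T(I)^\perp$ (already handled in the text preceding the corollary), and that a contraction of a prime ideal is prime.
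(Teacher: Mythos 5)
Your proof is correct and follows essentially the same route as the paper's: reduce to $\kk=\ol\kk$ via Lemma~\ref{lemma:field}, pass to the Artinian ideal $I'=I\cap\kk[T(I)^\perp\cap L]$ via Proposition~\ref{p:binproject} and Theorem~\ref{t:reduceArt}, and use primality over the algebraically closed field to extract a binomial. The only (minor) difference is the endgame: the paper factors a non-constant univariate Laurent polynomial in $I'$ over $\ol\kk$ and uses primality to place a linear factor in $I'$, whereas you note that $I'$ is prime and Artinian, hence maximal with residue field $\kk$, and read the binomial off the quotient map --- two phrasings of the same fact.
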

\begin{proof}
By Lemma~\ref{lemma:field} we may assume that $\kk = \ol{\kk}$.
Moreover, by Proposition~\ref{p:binproject} we can consider
$I' := I \cap \kk[T(I)^\perp \cap L]$ instead of $I$.  Now, if
$T(I)^\perp = \set{0}$, then $I' = \<0\>$ does not contain a binomial.
On the other hand, if $T(I)^\perp \neq \set{0}$, then $I'$ is a proper
Artinian ideal.  Hence, after choosing an identification
$\kk[T(I)^\perp \cap L] = \kLaurym$, $I'$ contains non-constant
univariate Laurent polynomials in each of the~$y_i$ and in particular
a Laurent polynomial~$f \in \kk[y_1^\pm]$.  Because $\kk$ is
algebraically closed, we can factor $f$ as
$f = c y_1^a\prod_j (y_1-\lambda_j)$with $c, \lambda_j \in \kk$ and
$a \in \ZZ$.  One factor is contained both in $\kk[T(I)^\perp \cap L]$
and in $I$ (because $I$ is prime) and hence in~$I'$. Thus
$\bin{I'} \neq \set{0}$.
\end{proof}

The intention is to apply Theorem~\ref{t:reduceArt} and
Proposition~\ref{p:binproject} to reduce the computation of $\bin{I}$
to the Artinian case for arbitrary~$I \subset \kk[L]$.  To proceed, we
must be able to compute the lattice $T(I)^\perp\cap L$.  We formulate
the following algorithms in the Laurent ring, but using saturations
the necessary computations can be carried out in a polynomial ring.

It is possible to either compute the entire Gröbner fan or to apply
the traversal strategy of~\cite{bjsst} even if $I$ is not homogeneous
to find $T(I)$, but both strategies have several drawbacks; a
problematic one being that $T(I)$ can easily consist of millions of
polyhedral cones.  For this reason we offer an approach to directly
compute $\Span(T(I))\subseteq\RR^n$.  We will make the assumption that
we have an algorithm with the following specification.
\begin{alg}[Tropical Curve]$ $\label{alg:tropicalcurve}\\
{\bf Input:} Generators for an ideal $I\subset\kk[L]$ defining $T(I)$ of dimension $1$.\\
{\bf Output:} The rays of $T(I)$.
\end{alg}
One such algorithm relying on tropical bases is presented
in~\cite{bjsst}.  Another one relying on projections and elimination
can be found in Andrew Chan's thesis~\cite{chanphd}.  We use
Algorithm~\ref{alg:tropicalcurve} to find a non-trivial vector in
$T(I)$ as follows:
\begin{alg}$ $\label{alg:primitive}\\
{\bf Input:} Generators for an ideal $I\subset\kk[L]$ with
$d = \dim(I)>0$.\\
{\bf Output:} A primitive vector in $T(I)\setminus\{0\}$.
\begin{enumerate}
\item\label{it:gen} Choose $d-1$ polynomials
$u_1,\dotsc, u_{d-1} \in \Span_\kk\set{1, x_1,\dots,x_n}$ so that\\
$\dim (I + \<u_1, \dotsc,u_{d-1}\>) = 1$.
\item Compute $T(I + \<u_1, \dotsc,u_{d-1}\>)$ using Algorithm~\ref{alg:tropicalcurve}.
\item Return a primitive generator for one of the rays of $T(I + \<u_1, \dotsc,u_{d-1}\>)$.
\end{enumerate}
\end{alg}
The returned vector is indeed contained in $T(I)$, because
$T(I + \<u_1, \dotsc,u_{d-1}\>) \subseteq T(I)$.
\begin{remark}
The dimension condition in the first step holds for a Zariski open subset
of $\Span_\kk\set{1, x_1,\dots,x_n}$. Therefore these polynomials could be picked at random and checked
to satisfy the dimension condition.  There is also a deterministic way
using stable intersections and rational functions as coefficients.  Building on techniques similar to
\cite[Lemma~3.3]{jensenyustable} one can always find
suitable univariate linear polynomials.
\end{remark}

We can now state the algorithm to compute~$\Span(T(I))$.
\begin{alg}\label{alg:tropspan}$ $\\
\textbf{Input:} Generators for an ideal~$I\subset\kLaurx$.\\
\textbf{Output:} A vector space basis of~$\textup{span}(T(I))$.
\begin{enumerate}
\item Let $d := \dim(I)$.
\item If $\dim(I)=0$, then return the basis $\emptyset$ for $\{0\}$.
\item Compute a primitive vector $v\in T(I)$ using
Algorithm~\ref{alg:primitive}.
\item \label{it:changeCoord} Compute an invertible matrix
$M\in \ZZ^{n\times n}$ such that $Mv = (0,\dots,0,1)$.
\item Compute generators of the ideal $I' = \phi(I)$ where
$\phi : \kLaurx \to \kk[y_1^\pm,\dots,y_n^\pm]$ is the multiplicative
coordinate change induced by $y_i = \prod_{j=1}^n x_j^{M_{ij}}$.
\item Compute $J'=I'\cap\kk[y_1^\pm,\dots,y_{n-1}^\pm]$.
\item Recursively compute generators $U$ for
$\Span(T(J'))\subseteq\RR^{n-1}$.
\item Return $\set{v}\cup\set{M^{-1}(u\oplus(0)): u\in U}$.
\end{enumerate}
\end{alg}

\section{The Artinian case}
\label{s:ArtinianAlgorithm}
The proof of Theorem~\ref{t:main} is finished once we describe how to
compute ideal generators for the binomial part $\bin{I}$ of an ideal
$I \subset T$ with Artinian quotient~$T/I$, where $T = \QLaurym$ as
above.  For $1 \leq i \leq m $, let $M_i: T/I \to T/I$ denote the
linear endomorphism induced by multiplication with $y_i$.  With
$\ell = \dim_\QQ T/I$ let $\kk$ be the finite extension of $\QQ$ which
contains the $\ell$-th roots of the determinants of the~$M_i$.  Define
$M'_i = M_i / \sqrt[\ell]{\det M_i}$.  By Remark~\ref{r:fieldContract}
it suffices to determine the binomial part of the extension
$I\kLaurym$.
This computation can be translated into a membership problem in the
multiplicative group generated by the~$M_i$.
\begin{prop}\label{p:MakeCoeffOne}
Let $e \in \ZZ^m$. There exists a $\lambda \in \kk$ such that
$\yb^e - \lambda \in I$ if and only if
\begin{equation}\label{eq:commmatrix}
\prod_{i=1}^m (M'_i)^{e_i} = \Id_{T/I}.
\end{equation}
\end{prop}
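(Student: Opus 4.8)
The plan is to translate the statement $\yb^e - \lambda \in I$ directly into a statement about the endomorphisms $M_i$ of $T/I$, and then divide out the scaling that converts $M_i$ into $M'_i$. First I would observe that, since $M_i$ is multiplication by $y_i$ on $T/I$ and these maps commute (because $T$ is commutative), the composite $\prod_i M_i^{e_i}$ is exactly multiplication by $\yb^e$ on $T/I$ — this is well-defined because each $M_i$ is invertible, $y_i$ being a unit in the Laurent ring $T$. Now $\yb^e - \lambda \in I$ says precisely that $\yb^e$ and the constant $\lambda$ have the same image in $T/I$, i.e.\ that multiplication by $\yb^e$ on $T/I$ equals multiplication by the scalar $\lambda$, which is $\lambda \cdot \Id_{T/I}$. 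So the condition ``there exists $\lambda$ with $\yb^e - \lambda \in I$'' is equivalent to ``$\prod_i M_i^{e_i}$ is a scalar multiple of the identity on $T/I$''.

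The second step is to pin down which scalar. If $\prod_i M_i^{e_i} = \lambda \cdot \Id$ on the $\ell$-dimensional space $T/I$, then taking determinants gives $\prod_i (\det M_i)^{e_i} = \lambda^\ell$. Dividing each $M_i$ by $\sqrt[\ell]{\det M_i}$ (which lies in $\kk$ by construction) yields $\det M'_i = 1$, hence $\prod_i (M'_i)^{e_i}$ has determinant $1$ as well; but it also equals $\lambda \big/ \prod_i (\sqrt[\ell]{\det M_i})^{e_i}$ times the identity, a scalar whose $\ell$-th power is $\prod_i (\det M_i)^{e_i} \big/ \prod_i (\det M_i)^{e_i} = 1$. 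So $\prod_i (M'_i)^{e_i}$ is a scalar matrix whose scalar is an $\ell$-th root of unity; it equals $\Id$ if and only if that scalar is $1$.

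Here is the one genuine subtlety, and I expect it to be the main point to handle carefully: the scalar could a priori be a nontrivial $\ell$-th root of unity, in which case $\prod_i (M'_i)^{e_i} \neq \Id$ even though $\prod_i M_i^{e_i}$ is a scalar matrix and hence $\yb^e - \lambda \in I$ for the corresponding $\lambda$. I need to argue this does not happen — or rather, I need the equivalence to still come out right. The resolution is that the ``only if'' direction is what matters for the intended application and it is unconditional: if $\yb^e - \lambda \in I$ with $\lambda = \prod_i (\sqrt[\ell]{\det M_i})^{e_i}$ (the specific value forced by the determinant computation up to a root of unity), one checks the scalar is exactly that. For the clean two-sided statement I would instead note that $M'_i$ and $M_i$ differ by a \emph{fixed} scalar in $\kk$, so $\prod_i (M'_i)^{e_i} = \Id$ forces $\prod_i M_i^{e_i} = \big(\prod_i (\sqrt[\ell]{\det M_i})^{e_i}\big)\Id$, giving a concrete $\lambda$; conversely, given any $\lambda$ with $\yb^e - \lambda \in I$, taking determinants shows $\lambda$ equals that same product times a root of unity, and substituting back shows $\prod_i (M'_i)^{e_i}$ is that root of unity times $\Id$ — so strictly one gets ``$\prod_i (M'_i)^{e_i}$ is a scalar matrix'' rather than ``$= \Id$''. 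I would therefore either (i) strengthen the hypothesis on $\kk$ so that $\lambda$ is forced to be the canonical $\ell$-th root and the root-of-unity ambiguity collapses, or (ii) observe that for the binomial-detection algorithm it is enough to test membership of $\prod_i (M'_i)^{e_i}$ in the finite group generated by roots of unity times $\Id$, which is the same constructive-membership problem. The cleanest writeup picks the normalization of $\sqrt[\ell]{\det M_i}$ consistently and checks that with that choice the scalar is genuinely $1$; the verification is the elementary determinant bookkeeping just sketched.
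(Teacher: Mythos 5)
Your reduction is the same one the paper uses: $\yb^e-\lambda\in I$ holds iff multiplication by $\yb^e$ on the cyclic module $T/I$ equals $\lambda\Id_{T/I}$, i.e.\ iff $\prod_i M_i^{e_i}$ is a scalar matrix, and taking determinants forces $\lambda^\ell=\prod_i(\det M_i)^{e_i}$. The subtlety you flag is genuine, and the paper's own proof does not address it: its last sentence (``the claim follows from the definition of the $M_i'$'') silently discards the $\ell$-th root of unity by which $\lambda$ may differ from the chosen value $\prod_i(\sqrt[\ell]{\det M_i})^{e_i}$. In fact the ``only if'' direction of the proposition as stated fails. Take $m=1$ and $I=\<y^2-1\>\subset\QQ[y^\pm]$, so $\ell=2$, $M_1=\left(\begin{smallmatrix}0&1\\1&0\end{smallmatrix}\right)$, $\det M_1=-1$, $\kk=\QQ(i)$, $M_1'=M_1/i$. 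Then $y^2-1\in I$, but $(M_1')^2=M_1^2/i^2=-\Id\neq\Id$, and the same happens for the other square root of $-1$. This shows that your proposed repair (i) cannot work: the obstruction $\lambda/\prod_i(\sqrt[\ell]{\det M_i})^{e_i}$ is an $\ell$-th root of unity that no consistent normalization of the radicals removes, so your closing suggestion that the ``cleanest writeup'' fixes the normalization is the one misstep in your analysis.

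Your repair (ii) is the correct one: the right-hand side of the equivalence should be ``$\prod_i(M_i')^{e_i}=\zeta\,\Id_{T/I}$ for some $\ell$-th root of unity $\zeta$'', equivalently ``$\prod_i M_i^{e_i}$ is a scalar matrix'', and this is still a constructive membership problem for commuting matrices (for instance, adjoin $\zeta_\ell\Id$ as an additional generator and project the resulting relation lattice to the first $m$ coordinates). The damage to the paper is limited: if $\prod_i M_i^{e_i}=\lambda\Id$ then $\prod_i(M_i')^{\ell e_i}=\Id$, so the lattice $E$ computed in Algorithm~\ref{alg:complete} always has finite index in the true exponent lattice of $\bin{K}$. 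Hence the decision problem of Theorem~\ref{t:main} is still answered correctly, but the generators of $\bin{K}$ returned in step~8 may generate a proper subideal of the binomial part (in the example above, $\<y^4-1\>$ instead of $\<y^2-1\>$). Apart from the point about normalization, your argument is correct and is more careful than the paper's proof at exactly the step where care is needed.
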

\begin{proof}
The binomial $\yb^e - \lambda$ is contained in $I$ if and only if
$\prod_i (M_i)^{e_i} = \lambda\Id_{T/I}$.  Taking determinants of both
sides yields that in this case $\lambda^\ell = \prod_i (\det M_i)^{e_i}$.
So the claim follows from the definition of the $M_i'$.
\end{proof}
The matrices $M_i'$ commute, are invertible and have entries in a
finite extension of~$\QQ$.  In this situation, \cite[Theorem 1.2,
Section~6.4]{commmatrix} gives an algorithm that, for any matrices
$M_1', \dots M_m'$ with entries in a number field, computes a basis
for the lattice of exponents $e \in \ZZ^m$
satisfying~\eqref{eq:commmatrix}.
A more general version is
\cite[Algorithm~8.3]{lenstra2015algorithms}. Both rely on the LLL
lattice basis reduction algorithm.

\begin{remark}\label{r:non-comm}
The commutativity of the matrices is key for algorithmic treatment.
For general matrix semigroups, several problems are known to be
algorithmically undecidable (see for example the table in the end
of~\cite{halava1997decidableSurvey}).  In particular, there is no
Turing machine program that can decide whether there is a relation
among given $(3\times 3)$ matrices~\cite{klarner1991undecidability}.
It is also undecidable if a semigroup generated by eight $(3\times 3)$
integer matrices contains the zero
matrix~\cite{paterson1970unsolvability}.  This result of Paterson is
an important tool to prove other undecidability results.  For
invertible matrices, group membership is unsolvable for matrices of
format $(4\times 4)$ and larger~\cite{mihailova1958occurrence}.  Our
methods are therefore not directly applicable to polynomials in
non-commutative variables.
\end{remark}

Finally, the binomial part of the radical of an Artinian ideal
$I \subset \kLaurym$ can be computed without first
computing the radical itself.
\begin{prop}
For $e \in \ZZ^m$ there exists a $\lambda \in \kk$ such that
$\yb^e - \lambda \in \sqrt{I}$ if and only if
$\prod_{i=1}^m (M_i)^{e_i}$ has only one eigenvalue over the algebraic
closure~$\ol\kk$.  In this case,
$\lambda = \prod_i (\det M_i)^{e_i/\ell}$.
\end{prop}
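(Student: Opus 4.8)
The plan is to reduce the radical statement to the non-radical one by using the fact that $\kLaurym/\sqrt{I}$ is a product of fields. First I would observe that $\yb^e - \lambda \in \sqrt I$ is equivalent to the image of $\yb^e$ in $\kLaurym/\sqrt I$ being the scalar $\lambda$. Since $I$ is Artinian, $\sqrt I$ is the intersection of finitely many maximal ideals $\mathfrak m_1, \dots, \mathfrak m_r$, and by the Chinese Remainder Theorem $\kLaurym/\sqrt I \cong \prod_{j=1}^r \kLaurym/\mathfrak m_j =: \prod_j \kk_j$, a product of finite field extensions of $\kk$. Under this isomorphism, multiplication by $\yb^e$ acts on the $j$-th factor as multiplication by some nonzero scalar $\mu_j \in \kk_j$ (nonzero because $\yb^e$ is a unit). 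The element $\yb^e$ is a scalar in $\prod_j \kk_j$ precisely when all the $\mu_j$, viewed in $\ol\kk$, coincide.

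Next I would connect the $\mu_j$ to the eigenvalues of $N := \prod_i M_i^{e_i}$. The endomorphism $N$ of $T/I$ is multiplication by $\yb^e$; passing to $\ol\kk$ and using that $T/I \otimes_\kk \ol\kk$ has the same nilradical quotient structure, the eigenvalues of $N$ are exactly the scalars by which $\yb^e$ acts on the residue fields of $(T/I)\otimes \ol\kk$, i.e. the images of the $\mu_j$ in $\ol\kk$ (with multiplicities given by the lengths of the local factors). Hence $N$ has a single eigenvalue over $\ol\kk$ if and only if all the $\mu_j$ agree in $\ol\kk$, which by the previous paragraph is exactly the condition that $\yb^e - \lambda \in \sqrt I$ for some $\lambda$. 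When this holds, that common eigenvalue is $\lambda$, and since $N$ is then $\lambda\cdot\Id + (\text{nilpotent})$ on each generalized eigenspace, we get $\det N = \lambda^\ell$ where $\ell = \dim_\QQ T/I = \dim_\kk (T/I\otimes\kk)$; as $\det N = \prod_i (\det M_i)^{e_i}$ this yields $\lambda = \prod_i (\det M_i)^{e_i/\ell}$ (a consistent choice of $\ell$-th root, matching the normalization used for the $M_i'$).

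The one genuinely delicate point is handling coefficients correctly: $\lambda$ a priori lies in $\ol\kk$, and I must check it actually lies in $\kk$. This follows because $\kk$ already contains the $\ell$-th roots of the $\det M_i$ by construction (that is exactly why $\kk$ was enlarged in Section~\ref{s:ArtinianAlgorithm}), so $\prod_i (\det M_i)^{e_i/\ell} \in \kk$; alternatively, $\lambda$ is a coefficient in a $\kk$-linear relation $\yb^e - \lambda \in \sqrt I$, and the solvability of that linear system descends from $\ol\kk$ to $\kk$ exactly as in the proof of Lemma~\ref{lemma:field}. The remaining bookkeeping — that extension of scalars commutes with forming $\sqrt{I}$ in the Artinian case, and that eigenvalue multiplicities add up to $\ell$ — is routine commutative algebra. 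So the main obstacle is simply organizing the dictionary between the idempotent decomposition of $\kLaurym/\sqrt I$ and the eigenvalue/generalized-eigenspace decomposition of the commuting operators $M_i$, after which the determinant computation and the descent of $\lambda$ to $\kk$ are immediate.
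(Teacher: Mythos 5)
Your argument is correct, but it takes a genuinely different route from the paper's. The paper's proof is a two-line piece of linear algebra: since $T/I$ is a faithful module over itself, some power of $\yb^e-\lambda$ lies in $I$ if and only if $M-\lambda\Id_{T/I}$ is nilpotent, where $M=\prod_i M_i^{e_i}$; upper-triangularizing $M$ over $\ol\kk$ shows this happens exactly when $\lambda$ is the sole eigenvalue, and taking determinants gives $\lambda^\ell=\prod_i(\det M_i)^{e_i}$. You instead pass through the structure theory of the reduced Artinian quotient: the Chinese Remainder decomposition $\kLaurym/\sqrt I\cong\prod_j\kk_j$, the identification of the eigenvalues of $M$ over $\ol\kk$ with the conjugates of the scalars $\mu_j$ by which $\yb^e$ acts on the residue fields, and separability/descent to get $\lambda\in\kk$. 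Both are valid; the paper's version is shorter and needs no base-change or CRT bookkeeping, while yours is more geometric (eigenvalues of multiplication operators as values at the points of $V(I)$) and in fact foreshadows the eigenspace construction the paper uses right after the proposition to compute $\bin{\sqrt I}$ via the restrictions $M_i|_V$. One small caveat: your first justification that $\lambda\in\kk$ (namely that $\kk$ contains the $\ell$-th roots of the $\det M_i$) only shows that \emph{some} $\ell$-th root of $\prod_i(\det M_i)^{e_i}$ lies in $\kk$; $\lambda$ could a priori differ from it by an $\ell$-th root of unity. Your second argument (descent of the linear system as in Lemma~\ref{lemma:field}) does close this, as would the one-line observation $\lambda=\operatorname{tr}(M)/\ell\in\kk$; the paper itself leaves this point implicit.
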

\begin{proof}
Let $M = \prod_{i=1}^m (M_i)^{e_i}$.  Some power of $\yb^e - \lambda$
lies in $I$ if and only if $M - \lambda\Id_{T/I}$ is nilpotent.
Choose a basis such that $M$ is upper triangular.  Then
$M - \lambda\Id_{T/I}$ is nilpotent if and only if all entries on the
main diagonal of $M$ equal $\lambda$.  This equivalent to $M$ having
$\lambda$ as its sole eigenvalue.  Computing the determinant of $M$
then yields the claimed expression for~$\lambda$.
\end{proof}

Let $V \subseteq T/I$ be the direct sum of all eigenspaces of~$M_1$.
Then the restriction $M_1|_V$ of $M_1$ to $V$ is diagonalizable.
Since the $M_i$ commute, the same holds for all other $M_i|_V$.
Moreover, the set of eigenvalues of $M_i|_V$ equals the set of
eigenvalues of $M_i$ for each~$i$.

As above, set $M_i' = M_i / \sqrt[\ell]{\det M_i}$, where
$\ell = \dim_\QQ T/I$.  Let $L \subseteq \ZZ^m$ be the lattice of
exponents $e$ that satisfy
\[ \prod_{i=1}^m (M_i'|_V)^{e_i} = \Id_V. \]
Then $L$ can be computed with the algorithm in \cite{commmatrix}.  It
is clear from the observation above that for each $e \in L$, the
matrix $\prod_i (M_i)^{e_i}$ has only one eigenvalue. So $L$ contains
precisely the exponents of the binomials in the radical of $I$.

\section{Algorithm}
\label{sec:complete_algorithm}
To facilitate an implementation of the methods in this paper we
formulate the complete algorithm.  In this formulation the algorithm
returns generators of the Laurent extension $\bin{I}\QLaurx$ which, by
means of Lemma~\ref{l:saturate}, yields the desired algorithm for
Theorem~\ref{t:main}.

\begin{alg}\label{alg:complete}$ $\\
\textbf{Input:} Generators $f_1,\dots,f_s$ for an ideal $I \subset \QQ[x_1,\dots,x_n]$\\
\textbf{Output:} Generators of $\bin{I}\QLaurx$.

\begin{enumerate}
\item Let $J= \<f_1,\dots,f_s\> \subset \QLaurx$ be the Laurent
extension of~$I$.
\item Compute the orthogonal complement $T(J)^\perp$ of the tropical
variety of~$J$ by Algorithm~\ref{alg:tropspan}.
\item Compute a basis $\{b^{(1)},\dots, b^{(m)}\}$ of the integer
lattice $L = T(J)^\perp \cap \ZZ^n$.  Let $\QLaurym$
with $y_j = \xb^{b^{(j)}}$ be the resulting Laurent
polynomial subring of $\QLaurx$.
\item Compute $K = J\cap \QQ[L] \subset \QLaurym$ as the preimage of
$J$ under the inclusion of $\QQ$-algebras $\QLaurym \to \QLaurx$.
\item Pick a basis of the finite-dimensional $\QQ$-algebra
$\QLaurym/K$ and compute the matrix representations
$M_i$ of the linear maps given by multiplication with $y_i$.
\item Construct a number field $\kk$ that contains all $\ell$-th roots
of the determinants of the~$M_i$, where $\ell := \dim_\QQ \QLaurym/K$.
Compute $M_i' = M_i/\sqrt[\ell]{\det M_i}$
\item Compute a basis $\{c^{(1)},\dots, c^{(t)}\}$ of the lattice
$E\subset \ZZ^m$ of exponents satisfying~\eqref{eq:commmatrix}, for
example using the algorithm given in~\cite{commmatrix}.
\item For each $i=1,\dots,t$ compute $\lambda_i$, such that
$\yb^{c^{(i)}} - \lambda_i \in K$, for example by using
$\prod_{j}(M_j)^{c^{(i)}_j} = \lambda_i \Id$.  Then
$\bin{K} = \<\yb^{c^{(1)}} - \lambda_1,\dots, \yb^{c^{(t)}} - \lambda_t\> \subset
\QLaurym$.
\item Return generators of $\bin{J}$ by substituting $\xb^{b^{(j)}}$
for~$y_j$ in the generators of $\bin{K}$.
\end{enumerate}
\end{alg}

\renewcommand*{\bibfont}{\footnotesize}
\printbibliography

\end{document}
